 \newtheorem{thm}{Theorem}[section]
 \newtheorem{cor}[thm]{Corollary}
 \newtheorem{lem}{Lemma}[section]
 \theoremstyle{definition}
 \newtheorem{defn}{Definition}[section]
 \theoremstyle{remark}
 \newtheorem*{ex}{Example}
 \numberwithin{equation}{section}
\begin{document}

%
%
%
%
%
%
%
%
%

\title[Definite Integral of $\alpha$-Fractal Functions]
 {Definite Integral of $\alpha$-Fractal Functions}

\author[Md Nazimul Islam]{Md Nazimul Islam}

\address{%
Department of Mathematics and Statistics \\
Aliah University \\
IIA/27, New Town, Kolkata-700 160, India.}

\email{n.islam000@gmail.com}

\thanks{}
\author{Imrul Kaish}
\address{Department of Mathematics and Statistics \br
	Aliah University \br
	IIA/27, New Town, Kolkata-700 160, India.}
\email{imrulksh3@gmail.com}
\subjclass{28A80, 65D05}

\keywords{$\alpha $-fractal function, Definite integral of $\alpha $-fractal function,
	Flipped $\alpha $-fractal function.}

\date{January 1, 2004}

\begin{abstract}
In this article the integration of the $\alpha$-fractal interpolation function $f^{\alpha }$ corresponding to any continuous function $f$ on a compact interval $I$ of $\mathbb{R}$ is estimated although there is no explicit form of $\alpha$-fractal interpolation function till now. Some results related to the definite integral of $f^{\alpha}$ are established. Also the flipped $\alpha $-fractal function $f_{F}^{\alpha _{F}}$ corresponding to the continuous function $f$ is constructed and a result is proved that relates the definite integrals of the fractal functions $f_{F}^{\alpha _{F}}$ and $f^{\alpha }$.	 
\end{abstract}

\maketitle
\section{Introduction}
\label{sec-1} 
There are objects in nature whose geometric structure is
very irregular and complicated. Such as surface of broken stone, the
boundary when a drop of ink falls on a paper, trees, system of
blood vessels, mountain ranges, coastlines, smoke etc. Benoit B. Mandelbrot 
\cite{Mandel} named these objects as fractals.

Barnsley \cite{BarnsFE} developed the fractal theory through introducing the
concept of fractal interpolation function (FIF) and constructed the FIF
using Hutchinson's operator \cite{Hutch} on an iterated function system
(IFS), whose attractor is the graph of a continuous function interpolating a
certain data set. FIFs whose graphs are fractals
have been broadly used in approximation theory, interpolation theory,
financial series, computer graphics, signal processing etc.

Later, Navascu\'{e}s \cite{NavaFP,NavaFT}, introduced the concept of $\alpha 
$-fractal interpolation function $f^{\alpha }$ which is a fractal
perturbation corresponding to a continuous function $f\in \mathcal{C}(I)$
defined on a compact interval $I$ of $\mathbb{R}$. It is discussed in
details the theory and applications of $\alpha $-fractal interpolation
function in the literature \cite{NavaFT,NavaSS,NavaFH}. The function $
f^{\alpha }$ is continuous but in general nowhere differentiable.

Barnsley \emph{et. al.} \cite{BarnsHarri} introduced the calculus of FIF and
showed that integral of a FIF is also a FIF. In \cite{Pan}, the fractional
order integral of FIF is discussed and it is shown that FIF can be
integrated on any closed interval $[a,b]\subset \lbrack 0,\infty )$. In the
same paper, it is also proved that the fractional order integral of FIF is
still a FIF on the interval $[0,b](b>0)$.

In \cite{NavaSebNI}, a method for finding the numerical integration of
affine fractal function is mentioned and an upper bound of error in
computation of the integration of an affine fractal function with the
integration of classical one is placed.

In the present paper, in Section~\ref{sec-3}, we estimate the definite
integral of the $\alpha$-fractal function $f^{\alpha }$ corresponding to a
continuous function $f$ on a compact interval $I$. We obtain some results
related to the definite integral of the function $f^{\alpha }$. In
Section~\ref{sec-4}, we constructed the flipped $\alpha $-fractal function $
f_{F}^{\alpha_{F}}$ corresponding to the continuous function $f$ on the
compact interval $I_{F}$ which is the flipped version of the interval $I$
about the $y$-axis. We proved that the definite integral of $f_{F}^{\alpha _{F}}$ on $I_{F}$ is equal to the
definite integral of $f^{\alpha }$ on $I$.

\section{Definitions and Notations}
\label{sec-2}
\subsection{Fractal Interpolation Functions}
\label{subsec-2.2}
\noindent Let $N\geq 2$ be an integer. Consider a set of interpolation points $
D=\{(x_{i},y_{i})\in I\times\mathbb{R}:i=0,1,...,N\}$, where $\Delta :x_{0}<x_{1}<...<x_{N}$ is a partition of
the closed interval $I=[x_{0},x_{N}]$. Set $I_{i}=[x_{i-1},x_{i}]$ for $
i=1,2,...,N$. Let $L_{i}:I\rightarrow I_{i},$ $i=1,2,...,N,$ be contraction
homeomorphisms with
\begin{equation*}
L_{i}(x_{0})=x_{i-1}, L_{i}(x_{N})=x_{i},
\end{equation*}
\begin{equation*}
|L_{i}(x)-L_{i}(y)|\leq a|x-y|,
\end{equation*}
for all $x,y$ in $I$ and for some $0\leq a<1.$\ \ \
\noindent Let $F_{i}:I\times\mathbb{R}\rightarrow \mathbb{R}$, $i=1,2,...,N$, be given continuous functions satisfying the join-up conditions $\ $
\begin{equation*}
F_{i}(x_{0},y_{0})=y_{i-1},F_{i}(x_{N},y_{N})=y_{i},
\end{equation*}%
\begin{equation*}
|F_{i}(x,y)-F_{i}(x,y^{\prime })|\leq |\alpha _{i}||y-y^{\prime }|,
\end{equation*}
for all $x$ in $I$ and for all $y,y^{\prime }$ in $\mathbb{R}$ and for some $0\leq \left\vert \alpha _{i}\right\vert <1,i=1,2,...,N$.

\noindent Define the mappings $W_{i}:I\times \mathbb{R}\rightarrow I_{i}\times \mathbb{R}
;$ $i=1,2,...,N$ by, for all $(x,y)$ $\in I\times \mathbb{R},$
\begin{equation}
W_{i}(x,y)=(L_{i}(x),F_{i}(x,y)).  \label{ifs}
\end{equation}

\begin{thm}
	\label{Barns}\cite{BarnsFI} The iterated function system $\left\langle
	I\times \mathbb{R};W_{i}(x,y):i=1,..,N\right\rangle $ defined in ($\ref{ifs}$) admits a unique
	attractor $G$, where $G$ is the graph of a continuous function $g:I\rightarrow R$ which obeys $g(x_{i})=y_{i}$ for $i=1,2,...,N$.
\end{thm}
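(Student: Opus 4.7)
The plan is to produce the attractor directly as the graph of a continuous function interpolating the data, using Banach's contraction principle on a suitable function space rather than applying Hutchinson's theorem on $I\times\mathbb{R}$ first; the maps $W_{i}$ are contractions only after choosing an appropriate weighted metric on $\mathbb{R}^{2}$, a technicality I would prefer to defer.

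First I would introduce the complete metric space
$$ \mathcal{C}^{\ast}(I) = \{ f \in \mathcal{C}(I) : f(x_{0}) = y_{0},\; f(x_{N}) = y_{N} \}, $$
equipped with the sup-norm (a closed subset of $(\mathcal{C}(I), \|\cdot\|_{\infty})$), and define the Read--Bajraktarevi\'c-type operator $T:\mathcal{C}^{\ast}(I)\to\mathcal{C}^{\ast}(I)$ piecewise by
$$ (Tf)(x) = F_{i}\bigl(L_{i}^{-1}(x),\, f(L_{i}^{-1}(x))\bigr) \quad \text{for } x \in I_{i},\; i=1,\dots,N. $$
The routine verifications are: well-definedness at the interior nodes $x_{1},\dots,x_{N-1}$, where the two one-sided pieces must agree --- this is precisely where the join-up conditions $F_{i}(x_{N},y_{N}) = y_{i} = F_{i+1}(x_{0},y_{0})$ together with $f(x_{0})=y_{0}$ and $f(x_{N})=y_{N}$ enter; the membership checks $(Tf)(x_{0})=y_{0}$ and $(Tf)(x_{N})=y_{N}$; and piecewise continuity inherited from $F_{i}$, $L_{i}^{-1}$, and $f$. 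The uniform Lipschitz-in-$y$ bound on $F_{i}$ then yields
$$ \|Tf - Th\|_{\infty} \leq \Bigl(\max_{1 \leq i \leq N} |\alpha_{i}|\Bigr)\,\|f - h\|_{\infty}, $$
so Banach's theorem supplies a unique fixed point $g\in\mathcal{C}^{\ast}(I)$. Substituting $x=x_{0}$ and $x=x_{N}$ in the functional equation $g(L_{i}(x)) = F_{i}(x, g(x))$ and applying the join-up conditions inductively yields $g(x_{i}) = y_{i}$ for every $i$.

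To conclude, set $G = \mathrm{graph}(g)$; the direct computation
$$ \bigcup_{i=1}^{N} W_{i}(G) = \bigcup_{i=1}^{N} \{(L_{i}(x), F_{i}(x, g(x))) : x \in I\} = \bigcup_{i=1}^{N} \{(u, g(u)) : u \in I_{i}\} = G $$
shows $G$ is invariant under the Hutchinson operator. For uniqueness of the attractor, I would equip $\mathbb{R}^{2}$ with a weighted metric $d_{\theta}((x,y),(x',y')) = |x-x'| + \theta\,|y-y'|$ and choose $\theta>0$ small enough that each $W_{i}$ becomes a $d_{\theta}$-contraction; Hutchinson's theorem then produces a unique non-empty compact invariant set, which must coincide with $G$. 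I expect the main obstacle to be the compatibility of $T$ at the interior nodes --- without the join-up conditions $Tf$ would be discontinuous there, making this step the structural hinge on which the entire construction rests; everything else is a standard deployment of the contraction principle.
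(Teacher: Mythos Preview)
The paper does not actually prove this theorem --- it is quoted from Barnsley \cite{BarnsFI} and stated without proof. Immediately after the statement, however, the paper sketches exactly the Read--Bajraktarevi\'c operator approach you propose: it introduces $\mathcal{C}^{\ast}(I)$, defines $T$ piecewise by $(Tf)(x)=F_{i}(L_{i}^{-1}(x),f(L_{i}^{-1}(x)))$, observes that $T$ is a contraction with ratio $|\alpha|_{\infty}$, and invokes Banach's theorem to obtain the interpolating fixed point $g$.

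Your proposal is correct and follows this same route, filling in the details the paper omits (join-up compatibility at interior nodes, the inductive check that $g(x_{i})=y_{i}$, and the weighted-metric argument for uniqueness of the attractor in $I\times\mathbb{R}$). Nothing needs to change.
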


\noindent Let $\mathcal{C}^{\ast }(I)=\{f\in \mathcal{C}
(I):f(x_{0})=y_{0},f(x_{N})=y_{N}\}$ and $\mathcal{C}^{\ast \ast }(I)=\{f\in
\mathcal{C}(I):f(x_{i})=y_{i};$ $i=0,1,2,...,N\}.$

\noindent The Read-Bajraktarvic (RB) operator $T:\mathcal{C}^{\ast }(I)\rightarrow
\mathcal{C}^{\ast \ast }(I)$ defined by (see \cite{MassoFSW})
\begin{equation*}
(Tf)(x)=F_{i}(L_{i}^{-1}(x),f(L_{i}^{-1}(x))); x\in I_{i},i=1,2,...,N,
\end{equation*}
is a contraction with contractivity factor $\left\vert \alpha \right\vert
_{\infty }=\max \{\left\vert \alpha _{i}\right\vert :i=1,..,N\}<1.$ Due to
Banach fixed point theorem, $T$ has a unique fixed point $g$ (say) which
also interpolates the points of $D$. This function $g$ is called a fractal interpolation function or simply a fractal function corresponding to the IFS
($\ref{ifs}$) and this unique function $g$ satisfies the fixed point
equation
\begin{equation}
g(x)=F_{i}(L_{i}^{-1}(x),g(L_{i}^{-1}(x))); x\in I_{i},i=1,2,...,N.
\label{dn1}
\end{equation}
The free parameters $\alpha _{i}$, $i=1,2,...,N$ are the vertical scaling factors
of the transformation $W_{i}$ and the corresponding vector $\alpha =(\alpha _{1},\alpha
_{2},...,\alpha _{N})$ is called the scale vector of the IFS ($\ref{ifs}$).

\subsection{\ $\protect\alpha $-Fractal Interpolation Functions}
\label{subsec-2.3}
\noindent Let $f\in \mathcal{C}(I)$. Consider the IFS defined by the iterated mappings%
\begin{equation}
L_{i}(x)=a_{i}x+e_{i}  \label{dn2}
\end{equation}
and
\begin{equation}
F_{i}(x,y)=\alpha _{i}y+f(L_{i}(x))-\alpha _{i}b(x),  \label{dn3}
\end{equation}
where $a_{i}=\frac{x_{i}-x_{i-1}}{x_{N}-x_{0}}$, $e_{i}=\frac{
	x_{N}x_{i-1}-x_{0}x_{i}}{x_{N}-x_{0}}$, $0\leq \left\vert \alpha
_{i}\right\vert <1,i=1,2,...,N$ and $b\in \mathcal{C}(I)$, known as base
function which satisfy $b(x_{0})=f(x_{0})$ , $b(x_{N})=f(x_{N})$.

\noindent Let $f^{\alpha }(=f_{\Delta ,b}^{\alpha })$ be the continuous function whose
graph is the attractor of the IFS represented by ($\ref{dn2}$) and ($\ref
{dn3}$). This $f^{\alpha }$ is called the $\alpha $\emph{-}fractal interpolation function or simply $\alpha$-fractal function
of $f$ with respect to the base function $b$ and the partition $\Delta $.
From ($\ref{dn1}$), $f^{\alpha }$ satisfies the fixed point equation
\begin{equation}
f^{\alpha }(x)=f(x)+\alpha _{i}(f^{\alpha }-b)(L_{i}^{-1}(x)),  \label{dn4}
\end{equation}
for all $x\in I_{i},$ $i=1,2,...,N.$ From ($\ref{dn4}$), it is easy to
deduce that
\begin{equation*}
\left\Vert f^{\alpha }-f\right\Vert _{\infty }\leq \frac{\left\vert \alpha
	\right\vert _{\infty }}{1-\left\vert \alpha \right\vert _{\infty }}
\left\Vert f-b\right\Vert _{\infty }.
\end{equation*}

\noindent For $\alpha =0$, the fractal function $f^{\alpha }$ agrees with $f$. More
discussion of $\alpha $-fractal function for different choices of $b$ can be
found in \cite{NavaFP,NavaNP,NavaChand}.

\section{Definite Integral of $\protect\alpha$-Fractal Functions}
\label{sec-3}
In this section, the definite integral $\int_{x_{0}}^{x_{N}}f^{\alpha }(x)dx$
of the fractal function $f^{\alpha }$ corresponding to any continuous
function $f$ defined on a compact interval $I=[x_{0},x_{N}]$ is estimated
and some results related to the definite integrals of fractal functions are
established.

\begin{thm}
	\label{thrm-fintegral}Let $f^{\alpha }$ be the fractal function of $f\in  
	\mathcal{C}(I)$ with base $b\in \mathcal{C}(I)$, then  
	\begin{equation}
	\int_{x_{0}}^{x_{N}}f^{\alpha }(x)dx=\frac{1}{(1-\lambda )}
	\int_{x_{0}}^{x_{N}}f(x)dx-\frac{\lambda }{(1-\lambda )}
	\int_{x_{0}}^{x_{N}}b(x)dx,  \label{thrm-fintegral1}
	\end{equation}
	where $\lambda =\sum_{i=1}^{N}a_{i}\alpha _{i}.$
\end{thm}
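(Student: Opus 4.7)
The plan is to exploit the self-referential fixed point equation (\ref{dn4}) for $f^\alpha$ by splitting the integral over the partition and performing a change of variables on each subinterval, which turns the equation into a linear equation for the unknown quantity $\int_{x_0}^{x_N} f^\alpha(x)\,dx$ itself.

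First, I would decompose
\[
\int_{x_0}^{x_N} f^\alpha(x)\,dx = \sum_{i=1}^{N} \int_{x_{i-1}}^{x_i} f^\alpha(x)\,dx,
\]
and on each $I_i = [x_{i-1},x_i]$ substitute the fixed point relation
$f^\alpha(x) = f(x) + \alpha_i\bigl(f^\alpha - b\bigr)(L_i^{-1}(x))$.
The first term reassembles into $\int_{x_0}^{x_N} f(x)\,dx$. For the second term, I would apply the change of variable $u = L_i^{-1}(x)$; since $L_i(x) = a_i x + e_i$ is affine with $L_i(x_0)=x_{i-1}$ and $L_i(x_N)=x_i$, we have $dx = a_i\,du$ and the limits become $x_0$ and $x_N$, yielding
\[
\int_{x_{i-1}}^{x_i} \alpha_i (f^\alpha - b)(L_i^{-1}(x))\,dx = a_i\alpha_i \int_{x_0}^{x_N} \bigl(f^\alpha(u)-b(u)\bigr)\,du.
\]

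Summing over $i$ and writing $\lambda = \sum_{i=1}^N a_i\alpha_i$ gives
\[
\int_{x_0}^{x_N} f^\alpha(x)\,dx = \int_{x_0}^{x_N} f(x)\,dx + \lambda\int_{x_0}^{x_N} f^\alpha(x)\,dx - \lambda\int_{x_0}^{x_N} b(x)\,dx,
\]
a linear equation in the unknown $\int_{x_0}^{x_N} f^\alpha$. Solving it produces the stated formula, provided $1-\lambda \neq 0$.

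The only genuine obstacle is justifying that $\lambda \neq 1$, which I would address by noting $a_i = (x_i-x_{i-1})/(x_N-x_0) > 0$ with $\sum_{i=1}^N a_i = 1$, so that
\[
|\lambda| \le \sum_{i=1}^{N} a_i |\alpha_i| \le |\alpha|_{\infty} < 1,
\]
by the hypothesis $|\alpha_i|<1$ from Section~\ref{subsec-2.2}. A minor bookkeeping point is that the use of Riemann integration on each $I_i$ is legitimate because $f^\alpha$ is continuous (Theorem~\ref{Barns} applied to the IFS in Section~\ref{subsec-2.3}), and the change of variables is valid since $L_i$ is affine and hence $\mathcal{C}^1$.
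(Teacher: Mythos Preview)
Your proof is correct and follows essentially the same route as the paper: integrate the self-referential equation over $I$, change variables via $z=L_i^{-1}(x)$ on each subinterval to obtain a linear equation in $\int_{x_0}^{x_N} f^\alpha$, and solve. Your additional justification that $|\lambda|\le |\alpha|_\infty<1$ (so $1-\lambda\neq 0$) is a welcome detail that the paper leaves implicit.
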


\begin{proof}
	For $x\in I_{i},i=1,...,N,$ the self-referential equation for $f^{\alpha }$:
	\begin{equation*}
	f^{\alpha }(x)=f(x)+\alpha _{i}(f^{\alpha }-b)(L_{i}^{-1}(x)).
	\end{equation*}
	Then
	\begin{equation*}
	\int_{x_{0}}^{x_{N}}f^{\alpha
	}(x)dx=\int_{x_{0}}^{x_{N}}f(x)dx+\sum_{i=1}^{N}\alpha
	_{i}\int_{x_{i-1}}^{x_{i}}(f^{\alpha }-b)(L_{i}^{-1}(x))dx.
	\end{equation*}
	Letting $L_{i}^{-1}(x)=z,$
	\begin{eqnarray*}
		\int_{x_{0}}^{x_{N}}f^{\alpha }(x)dx &=&\int_{x_{0}}^{x_{N}}f(x)dx+\left[
		\sum_{i=1}^{N}a_{i}\alpha _{i}\right] \int_{x_{0}}^{x_{N}}(f^{\alpha
		}-b)(z)dz \\
		&=&\int_{x_{0}}^{x_{N}}f(x)dx+\lambda \int_{x_{0}}^{x_{N}}(f^{\alpha
		}-b)(x)dx,
	\end{eqnarray*}
	where $\lambda =\sum_{i=1}^{N}a_{i}\alpha _{i}.$	
	\noindent Therefore
	\begin{equation*}
	\int_{x_{0}}^{x_{N}}f^{\alpha}(x)dx=\frac{1}{(1-\lambda )}
	\int_{x_{0}}^{x_{N}}f(x)dx-\frac{\lambda }{(1-\lambda )}
	\int_{x_{0}}^{x_{N}}b(x)dx.
	\end{equation*}
\end{proof}

In the following example, we calculate the definite integral of the fractal
function $f^{\alpha }(x)=(x^3+x)^{\alpha}$, where $\alpha
=(0.2,-0.3,0.5,0.3,0.4)$ with base function $b(x)=2x$, over the interval $
[0,1]$. The function $f(x)=x^3+x$ and its fractal function on $[0,1]$ are
shown in Figure~\ref{fig1}.
\begin{ex}
	Let $\Delta :0<0.2<0.4<0.6<0.8<1$ be a partion of $I=[0,1]$, then $a_{i}=0.2,i=1,2,...,5,$ and $\lambda =\sum_{i=1}^{5}a_{i}\alpha _{i}=0.22.$
	Applying Theorem $\ref{thrm-fintegral}$,
	\begin{eqnarray*}
		\int_{0}^{1}(x^3+x)^{\alpha }dx &=&\frac{1}{(1-\lambda )}
		\int_{0}^{1}(x^3+x)dx-\frac{\lambda }{(1-\lambda )}
		\int_{0}^{1}2xdx \\
		&=&\frac{1}{(1-0.22)}\left[\frac{x^4}{4}+\frac{x^2}{2}\right] _{0}^{1}-\frac{0.22}{(1-0.22)}\left[ x^{2}\right] _{0}^{1} \\
		&=&\frac{53}{78}.
	\end{eqnarray*}
\end{ex}

\begin{figure}[h]
	\centering
	\resizebox{0.75\columnwidth}{!}{
		\includegraphics{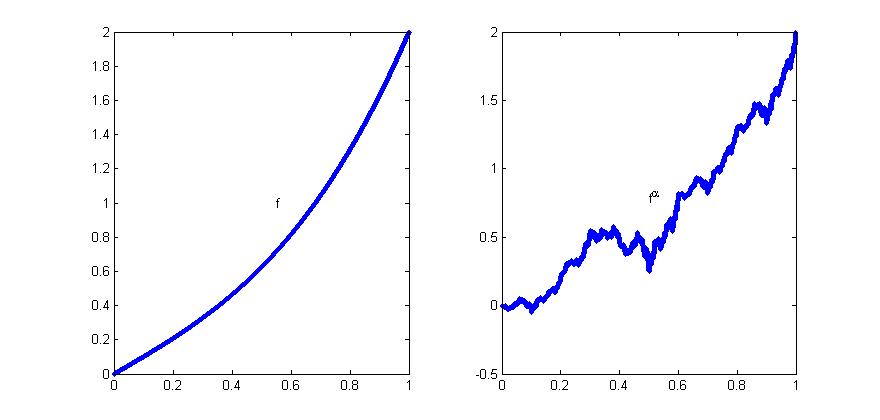} }
	\caption{The function $f(x)=x^3+x$ and its fractal function $f^{\alpha}(x)=(x^3+x)^{\alpha}$}
	\label{fig1}
\end{figure}

\begin{cor}
	\label{cor-sumscale0}Let the scale vector $\alpha =(\alpha _{1},\alpha _{2},...,\alpha _{N})$ be
	such that $\sum_{i=1}^{N}\alpha _{i}=0.$ Then, for any uniform partition $
	\Delta :x_{0}<x_{0}+h<x_{0}+2h<...<x_{0}+Nh=x_{N}$ of $I=[x_{0},x_{N}]$
	and for any base function $b$,
	\begin{equation*}
	\int_{x_{0}}^{x_{N}}f^{\alpha }(x)dx=\int_{x_{0}}^{x_{N}}f(x)dx.
	\end{equation*}
\end{cor}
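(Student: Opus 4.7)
The plan is to apply Theorem~\ref{thrm-fintegral} after evaluating the constant $\lambda$ for a uniform partition under the hypothesis $\sum_{i=1}^N \alpha_i = 0$. The whole point of the corollary is that uniformity of $\Delta$ forces the weights $a_i$ to collapse to a single common value, converting the assumption on $\sum \alpha_i$ into the vanishing of the weighted sum $\lambda$ that appears in the main theorem.

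First, I would compute the contraction ratios $a_i$ explicitly. For the uniform partition $x_j = x_0 + jh$, every subinterval $I_i = [x_{i-1}, x_i]$ has length $h$ while the ambient interval $I$ has length $Nh$, so
$$a_i \;=\; \frac{x_i - x_{i-1}}{x_N - x_0} \;=\; \frac{h}{Nh} \;=\; \frac{1}{N}$$
for every $i = 1, 2, \ldots, N$. In particular, $a_i$ is independent of $i$, which is precisely the feature of the uniform partition that makes the argument work.

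Next, I would substitute into the definition of $\lambda$ given in Theorem~\ref{thrm-fintegral}. Since the $a_i$ factor out of the sum,
$$\lambda \;=\; \sum_{i=1}^{N} a_i \alpha_i \;=\; \frac{1}{N} \sum_{i=1}^{N} \alpha_i \;=\; 0,$$
where the last equality uses the standing hypothesis $\sum \alpha_i = 0$. Finally, inserting $\lambda = 0$ into equation~(\ref{thrm-fintegral1}) makes the coefficient $\lambda/(1-\lambda)$ vanish, so the base term disappears (explaining why the conclusion is independent of the particular choice of $b$), while $1/(1-\lambda) = 1$, yielding $\int_{x_0}^{x_N} f^\alpha(x)\,dx = \int_{x_0}^{x_N} f(x)\,dx$ as required.

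There is no genuine obstacle here: the corollary is essentially a bookkeeping consequence of Theorem~\ref{thrm-fintegral}. The only substantive remark is that the uniformity hypothesis is used in exactly one place, namely to pull the constant $1/N$ out of the sum $\sum a_i \alpha_i$, and without it the identity $\sum \alpha_i = 0$ would not suffice to make $\lambda$ vanish.
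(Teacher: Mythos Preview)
Your proof is correct and follows essentially the same approach as the paper: compute $a_i = 1/N$ from the uniform partition, deduce $\lambda = \frac{1}{N}\sum_i \alpha_i = 0$, and invoke Theorem~\ref{thrm-fintegral}. The paper's version is terser but the logic is identical.
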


\begin{proof}
	For uniform partition, $a_{i}=\frac{x_{i}-x_{i-1}}{x_{N}-x_{0}}=\frac{h}{
		x_{N}-x_{0}}=\frac{1}{N},i=1,2,...,N.$ The desired result is obtained by
	setting $\lambda =\sum_{i=1}^{N}a_{i}\alpha _{i}=\frac{1}{N}
	\sum_{i=1}^{N}\alpha _{i}=0$ in Theorem $\ref{thrm-fintegral}$.
\end{proof}

\begin{ex}
	Consider $\Delta :0<0.2<0.4<0.6<0.8<1$ as a partition of $I=[0,1]$. Let $f(x)=\frac{1}{x+1}$ and $b(x)=1-\frac{x}{2}$ be defined on $I=[0,1].$ Let $f^{\alpha }$ be the fractal function of $f$ with base
	function $b$. Suppose the scale vector $\alpha
	=(-0.2,0.4,0.3,-0.6,0.1)$. Here $a_{i}=0.2,i=1,2,...,5,$ and $\lambda
	=\sum_{i=1}^{5}a_{i}\alpha _{i}=0$. Therefore, applying Corollary $\ref{cor-sumscale0}$
	\begin{eqnarray*}
		\int_{0}^{1}f^{\alpha }(x)dx &=&\int_{0}^{1}f(x)dx \\
		&=&\int_{0}^{1}\frac{dx}{x+1} \\
		&=&\log 2.
	\end{eqnarray*}
\end{ex}

\begin{thm}
	\label{thrm-alphabeta}Let $\alpha =(\alpha _{1},\alpha _{2},...,\alpha
	_{N})$ and $\beta =(\beta_{1},\beta_{2},...,\beta_{N})$ be two scale vectors with $\sum_{i=1}^{N}\alpha_{i}=\sum_{i=1}^{N}\beta_{i}$. Suppose that $f^{\alpha }$ and $f^{\beta}$ are respectively the
	$\alpha$-fractal function and $\beta$-fractal function of $f$ with the
	base function $b$. Then, for any uniform partition $\Delta
	:x_{0}<x_{0}+h<x_{0}+2h<...<x_{0}+Nh=x_{N}$ of $I=[x_{0},x_{N}]$,
	\begin{equation*}
	\int_{x_{0}}^{x_{N}}f^{\alpha}(x)dx=\int_{x_{0}}^{x_{N}}f^{\beta}(x)dx.
	\end{equation*}
\end{thm}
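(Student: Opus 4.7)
The plan is to reduce this theorem directly to Theorem~\ref{thrm-fintegral} by observing that on a uniform partition the quantity $\lambda = \sum_{i=1}^{N} a_i \alpha_i$ depends on $\alpha$ only through its coordinate sum. Since the formula in Theorem~\ref{thrm-fintegral} expresses $\int_{x_0}^{x_N} f^{\alpha}(x)\,dx$ as an affine function of $\int f$ and $\int b$ whose coefficients depend on the scale vector only via $\lambda$, two scale vectors yielding the same $\lambda$ must yield the same integral.

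Concretely, I would first write down the conclusion of Theorem~\ref{thrm-fintegral} applied to the $\alpha$-fractal function $f^{\alpha}$ with parameter $\lambda_{\alpha} = \sum_{i=1}^{N} a_i \alpha_i$, and separately to the $\beta$-fractal function $f^{\beta}$ with parameter $\lambda_{\beta} = \sum_{i=1}^{N} a_i \beta_i$. Next, I would exploit the uniform-partition hypothesis exactly as in the proof of Corollary~\ref{cor-sumscale0}: since $a_i = \frac{x_i - x_{i-1}}{x_N - x_0} = \frac{h}{x_N - x_0} = \frac{1}{N}$ for every $i$, the common factor $1/N$ pulls out of both sums, giving
\begin{equation*}
\lambda_{\alpha} = \frac{1}{N}\sum_{i=1}^{N} \alpha_i \quad \text{and} \quad \lambda_{\beta} = \frac{1}{N}\sum_{i=1}^{N} \beta_i.
\end{equation*}
The hypothesis $\sum \alpha_i = \sum \beta_i$ then forces $\lambda_{\alpha} = \lambda_{\beta}$.

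Finally, with $\lambda_{\alpha} = \lambda_{\beta} =: \lambda$, the two instances of Theorem~\ref{thrm-fintegral} produce identical right-hand sides (the same $\int_{x_0}^{x_N} f(x)\,dx$, the same $\int_{x_0}^{x_N} b(x)\,dx$, and the same coefficients $\frac{1}{1-\lambda}$ and $-\frac{\lambda}{1-\lambda}$), so $\int_{x_0}^{x_N} f^{\alpha}(x)\,dx = \int_{x_0}^{x_N} f^{\beta}(x)\,dx$. There is no substantive obstacle here; the theorem is essentially a restatement of the observation that the integral formula in Theorem~\ref{thrm-fintegral} factors through the scalar $\lambda$, together with the fact that on a uniform mesh $\lambda$ sees only $\sum \alpha_i$. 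The only point that deserves explicit mention is that $\lambda \ne 1$ so that the division is legitimate, which follows from $|\alpha_i| < 1$ (hence $|\lambda| \le \frac{1}{N}\sum |\alpha_i| < 1$) and likewise for $\beta$.
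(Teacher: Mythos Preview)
Your proposal is correct and follows essentially the same argument as the paper: compute $a_i=\tfrac{1}{N}$ on the uniform partition, deduce $\lambda_\alpha=\tfrac{1}{N}\sum\alpha_i=\tfrac{1}{N}\sum\beta_i=\lambda_\beta$, and then invoke Theorem~\ref{thrm-fintegral} to conclude equality of the two integrals. Your added remark that $|\lambda|<1$ (so the division by $1-\lambda$ is legitimate) is a small bonus not made explicit in the paper.
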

\begin{proof}
	For uniform partition, $a_{i}=\frac{x_{i}-x_{i-1}}{x_{N}-x_{0}}=\frac{h}{
		x_{N}-x_{0}}=\frac{1}{N},i=1,2,...,N$ and for the scale vectors $\alpha$ and $\beta$,
	$\lambda_{\alpha}=\sum_{i=1}^{N}a_{i}\alpha_{i}=\frac{1}{N}
	\sum_{i=1}^{N}\alpha_{i}=\frac{1}{N}\sum_{i=1}^{N}\beta_{i}=\sum_{i=1}^{N}a_{i}\beta_{i}=\lambda_{\beta}$.
	
	\noindent Therefore, applying Theorem $\ref{thrm-fintegral}$
	\begin{eqnarray*}
		\int_{x_{0}}^{x_{N}}f^{\alpha}(x)dx &=&\frac{1}{(1-\lambda_{\alpha})}\int_{x_{0}}^{x_{N}}f(x)dx-\frac{\lambda _{\alpha}}{(1-\lambda_{\alpha})}\int_{x_{0}}^{x_{N}}b(x)dx \\
		&=&\frac{1}{(1-\lambda _{\beta })}\int_{x_{0}}^{x_{N}}f(x)dx-\frac{\lambda
			_{\beta}}{(1-\lambda _{\beta})}\int_{x_{0}}^{x_{N}}b(x)dx \\
		&=&\int_{x_{0}}^{x_{N}}f^{\beta}(x)dx.
	\end{eqnarray*}
\end{proof}

\begin{thm}
	If $\left\vert \alpha \right\vert _{\infty }\rightarrow 0,$ then
	\begin{equation*}
	\int_{x_{0}}^{x_{N}}f^{\alpha }(x)dx\rightarrow \int_{x_{0}}^{x_{N}}f(x)dx
	\end{equation*}%
	and if $\alpha =0,$ then
	\begin{equation*}
	\int_{x_{0}}^{x_{N}}f^{\alpha }(x)dx=\int_{x_{0}}^{x_{N}}f(x)dx.
	\end{equation*}
\end{thm}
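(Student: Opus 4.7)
The plan is to derive both conclusions as direct consequences of Theorem~\ref{thrm-fintegral}, with $\lambda=\sum_{i=1}^{N}a_i\alpha_i$ being the key quantity whose behavior controls everything.

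First I would observe that since the coefficients $a_i=(x_i-x_{i-1})/(x_N-x_0)$ are non-negative and sum to $1$, the quantity $\lambda$ admits the uniform bound $|\lambda|\le\sum_{i=1}^{N}a_i|\alpha_i|\le|\alpha|_\infty\sum_{i=1}^{N}a_i=|\alpha|_\infty$. Thus $|\alpha|_\infty\to 0$ forces $\lambda\to 0$, and consequently $1/(1-\lambda)\to 1$ and $\lambda/(1-\lambda)\to 0$. Substituting into the formula \eqref{thrm-fintegral1} of Theorem~\ref{thrm-fintegral} gives $\int_{x_0}^{x_N}f^{\alpha}(x)\,dx\to\int_{x_0}^{x_N}f(x)\,dx$, since both $\int_{x_0}^{x_N}f(x)\,dx$ and $\int_{x_0}^{x_N}b(x)\,dx$ are fixed finite constants (recall $f,b\in\mathcal{C}(I)$).

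For the second part, $\alpha=0$ means $\alpha_i=0$ for every $i=1,2,\dots,N$, so $\lambda=\sum_{i=1}^{N}a_i\alpha_i=0$ exactly. Plugging $\lambda=0$ into \eqref{thrm-fintegral1} collapses the right-hand side to $\int_{x_0}^{x_N}f(x)\,dx$, yielding the equality.

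The only mild subtlety is ensuring $1-\lambda\neq 0$ so that the formula of Theorem~\ref{thrm-fintegral} applies throughout the limiting process; but the inequality $|\lambda|\le|\alpha|_\infty<1$ (already guaranteed by the standing assumption $0\le|\alpha_i|<1$) keeps $1-\lambda$ bounded away from zero for all $\alpha$ in the limit regime, so there is no genuine obstacle.
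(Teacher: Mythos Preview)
Your proof is correct and follows essentially the same route as the paper: both arguments invoke Theorem~\ref{thrm-fintegral} and use the bound $|\lambda|\le|\alpha|_{\infty}$ (from $\sum_i a_i=1$) to pass to the limit, with the $\alpha=0$ case following immediately by substitution. Your version is in fact slightly more careful, making the absolute-value bound explicit and noting that $1-\lambda$ stays away from zero.
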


\begin{proof}
	For first one, use $\lambda =\sum_{i=1}^{N}a_{i}\alpha _{i}\leq
	\sum_{i=1}^{N}a_{i}\left\vert \alpha \right\vert _{\infty }\rightarrow 0$ as
	$\left\vert \alpha \right\vert _{\infty }\rightarrow 0$ in Theorem $\ref
	{thrm-fintegral}$. The second part is straight forward.
\end{proof}

\begin{thm}
	\label{thrm-fLCintegral}Let $f_{b}^{\alpha },g_{\widetilde{b}}^{\alpha }$ be
	the fractal functions of $f,g\in \mathcal{C}(I)$ with the base functions $b,
	\widetilde{b}\in \mathcal{C}(I)$ respectively, then
	\begin{equation*}
	\int_{x_{0}}^{x_{N}}(\gamma f+\delta g)_{\gamma b+\delta \widetilde{b}
	}^{\alpha }(x)dx=\gamma \int_{x_{0}}^{x_{N}}f_{b}^{\alpha }(x)dx+\delta
	\int_{x_{0}}^{x_{N}}g_{\widetilde{b}}^{\alpha }(x)dx,
	\end{equation*}
	for any $\gamma ,\delta \in \mathbb{R}.$
\end{thm}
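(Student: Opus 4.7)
The plan is to reduce everything to Theorem \ref{thrm-fintegral} and then exploit linearity of the ordinary Riemann integral. The crucial observation is that the quantity $\lambda = \sum_{i=1}^{N} a_{i}\alpha_{i}$ depends only on the scale vector $\alpha$ and on the partition $\Delta$; it does not involve the seed function or the base. So the same $\lambda$ governs all three $\alpha$-fractal functions appearing in the statement, which means the right-hand side of (\ref{thrm-fintegral1}) is a linear expression in the pair (seed, base).

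First I would verify that $\gamma b + \delta \widetilde{b}$ is actually an admissible base function for $\gamma f + \delta g$, i.e. that it agrees with $\gamma f + \delta g$ at $x_{0}$ and $x_{N}$. This is immediate from the linearity of evaluation together with the base conditions $b(x_{0})=f(x_{0})$, $b(x_{N})=f(x_{N})$, $\widetilde{b}(x_{0})=g(x_{0})$, $\widetilde{b}(x_{N})=g(x_{N})$. So $(\gamma f + \delta g)^{\alpha}_{\gamma b + \delta \widetilde{b}}$ is well-defined and Theorem \ref{thrm-fintegral} applies to it with the same $\lambda$.

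Next I would apply Theorem \ref{thrm-fintegral} to each of the three fractal functions, writing
\[
\int_{x_{0}}^{x_{N}}(\gamma f+\delta g)^{\alpha}_{\gamma b+\delta\widetilde{b}}(x)\,dx=\frac{1}{1-\lambda}\int_{x_{0}}^{x_{N}}(\gamma f+\delta g)(x)\,dx-\frac{\lambda}{1-\lambda}\int_{x_{0}}^{x_{N}}(\gamma b+\delta\widetilde{b})(x)\,dx,
\]
and analogous expressions for $\int f_{b}^{\alpha}$ and $\int g_{\widetilde{b}}^{\alpha}$. Then I would invoke linearity of the classical Riemann integral to split the $\gamma f+\delta g$ and $\gamma b+\delta\widetilde{b}$ integrals, regroup the terms with coefficient $\gamma$ and with coefficient $\delta$, and recognize each bracket as the Theorem \ref{thrm-fintegral} formula for $\int f_{b}^{\alpha}$ and $\int g_{\widetilde{b}}^{\alpha}$ respectively.

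There is no real obstacle here; the only subtlety worth flagging is the one already noted, namely that $\lambda$ is a common factor independent of $f,g,b,\widetilde{b}$, which is what allows the right-hand side of (\ref{thrm-fintegral1}) to behave linearly in (seed, base). Once that is pointed out, the computation is a few lines of algebra.
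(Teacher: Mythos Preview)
Your argument is correct, but it takes a different route from the paper. The paper does not invoke Theorem~\ref{thrm-fintegral} at all; instead it proves the stronger \emph{pointwise} identity
\[
(\gamma f+\delta g)_{\gamma b+\delta\widetilde b}^{\alpha}=\gamma f_{b}^{\alpha}+\delta g_{\widetilde b}^{\alpha}
\]
by checking that the right-hand side satisfies the self-referential equation for the left-hand side and then appealing to uniqueness of the fixed point. The integral statement follows immediately from linearity of the Riemann integral applied to this functional equality. Your approach stays entirely at the level of integrals: you apply Theorem~\ref{thrm-fintegral} to each of the three fractal functions, use that $\lambda$ is common to all three, and exploit that the formula in~(\ref{thrm-fintegral1}) is linear in the pair (seed, base). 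This is shorter and more direct for the integral result as stated, but it gives less information; the paper's route yields the linearity of the map $(f,b)\mapsto f_{b}^{\alpha}$ itself, of which the integral identity is a corollary.
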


\begin{proof}
	For $x\in I_{i},i=1,...,N,$ the self-referential equations for $
	f_{b}^{\alpha }$ and $g_{\widetilde{b}}^{\alpha }$ are respectively
	\begin{equation*}
	f_{b}^{\alpha }(x)=f(x)+\alpha _{i}(f_{b}^{\alpha }-b)(L_{i}^{-1}(x))
	\end{equation*}
	and%
	\begin{equation*}
	g_{\widetilde{b}}^{\alpha }(x)=g(x)+\alpha _{i}(g_{\widetilde{b}}^{\alpha }-
	\widetilde{b})(L_{i}^{-1}(x)).
	\end{equation*}
	The equality
	\begin{equation}
	(\gamma f+\delta g)_{\gamma b+\delta \widetilde{b}}^{\alpha }=\gamma
	f_{b}^{\alpha }+\delta g_{\widetilde{b}}^{\alpha }  \label{thrm-fLCintegral1}
	\end{equation}
	is obtained from the uniqueness of the self-referential equation below
	\begin{equation*}
	(\gamma f_{b}^{\alpha }+\delta g_{\widetilde{b}}^{\alpha })(x)=\left( \gamma
	f+\delta g\right) (x)+\alpha _{i}\left[ (\gamma f_{b}^{\alpha }+\delta g_{
		\widetilde{b}}^{\alpha })-(\gamma b+\delta \widetilde{b})\right]
	(L_{i}^{-1}(x)).
	\end{equation*}
	The result is follows from the equation ($\ref{thrm-fLCintegral1}$).
\end{proof}

Setting $\gamma =-1$ and $\delta =0$ in Theorem $\ref{thrm-fLCintegral}$, we get
the following result.

\begin{cor}
	\label{cor-Nfintegral}Let $f_{b}^{\alpha }$ be the fractal function of $f$ with base function $b$, then
	\begin{equation*}
	\int_{x_{0}}^{x_{N}}(-f)_{-b}^{\alpha
	}(x)dx=-\int_{x_{0}}^{x_{N}}f_{b}^{\alpha }(x)dx.
	\end{equation*}
\end{cor}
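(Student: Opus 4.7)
The plan is to invoke Theorem~\ref{thrm-fLCintegral} directly with the prescribed choices $\gamma = -1$ and $\delta = 0$, so the argument is essentially a one-line substitution. The only mild bookkeeping issue is that Theorem~\ref{thrm-fLCintegral} is stated for two functions $f, g \in \mathcal{C}(I)$ with two bases $b, \widetilde{b} \in \mathcal{C}(I)$, so to apply it I must first name some auxiliary pair $(g, \widetilde{b})$; however, since this pair will enter the final formula only through a coefficient $\delta = 0$, its concrete identity is immaterial.

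First I would fix an arbitrary $g \in \mathcal{C}(I)$ together with an arbitrary base $\widetilde{b} \in \mathcal{C}(I)$ (for definiteness one may take $g \equiv 0$ and $\widetilde{b} \equiv 0$, noting these are continuous on $I$ so the $\alpha$-fractal function $g_{\widetilde{b}}^{\alpha}$ is well defined via~(\ref{dn4})). Next I would apply Theorem~\ref{thrm-fLCintegral} with $\gamma = -1$ and $\delta = 0$: the linear combination $\gamma f + \delta g$ collapses to $-f$, the combined base $\gamma b + \delta \widetilde{b}$ collapses to $-b$, and the right-hand side simplifies to
\[
(-1)\int_{x_{0}}^{x_{N}} f_{b}^{\alpha}(x)\, dx + 0 \cdot \int_{x_{0}}^{x_{N}} g_{\widetilde{b}}^{\alpha}(x)\, dx = -\int_{x_{0}}^{x_{N}} f_{b}^{\alpha}(x)\, dx,
\]
which is exactly the desired conclusion.

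There is no real obstacle in this proof; the only thing one could stumble over is whether the identity~(\ref{thrm-fLCintegral1}) used inside Theorem~\ref{thrm-fLCintegral} truly allows a trivial second summand, but inspection of that theorem's proof shows that the uniqueness argument for the self-referential equation is unaffected by the choice of scalars, so the case $\delta = 0$ is legitimate. Hence the corollary follows immediately.
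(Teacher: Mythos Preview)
Your proposal is correct and is essentially identical to the paper's own argument: the paper obtains the corollary simply by setting $\gamma=-1$ and $\delta=0$ in Theorem~\ref{thrm-fLCintegral}. Your extra bookkeeping about the auxiliary pair $(g,\widetilde{b})$ is harmless and does not change the approach.
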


A supporting example to the Corollary $\ref{cor-Nfintegral}$ is given below.

\begin{ex}
	Let $I=[0,1]$ and $\Delta :0<0.2<0.4<0.6<0.8<1$ be a partition of $I$. Let $
	f(x)=x^{3}$ and $b(x)=x^{2}$ be defined on $I=[0,1].$ Let $f_{b}^{\alpha }$
	and $(-f)_{-b}^{\alpha }$ be the fractal functions of $f$ and $-f$ with base
	functions $b$ and $-b$ respectively. Suppose the scale vector $\alpha
	=(-0.1,0,0.1,0.2,0.3)$. Here $a_{i}=0.2,i=1,2,...,5,$ and $\lambda
	=\sum_{i=1}^{5}a_{i}\alpha _{i}=0.1.$ Therefore, using Theorem $\ref
	{thrm-fintegral}$
	\begin{eqnarray*}
		\int_{0}^{1}(x^{3})_{x^{2}}^{\alpha }(x)dx &=&\int_{0}^{1}f_{b}^{\alpha
		}(x)dx \\
		&=&\frac{1}{(1-\lambda )}\int_{0}^{1}f(x)dx-\frac{\lambda }{(1-\lambda )}%
		\int_{0}^{1}b(x)dx \\
		&=&\frac{1}{(1-0.1)}\int_{0}^{1}x^{3}dx-\frac{0.1}{(1-0.1)}
		\int_{0}^{1}x^{2}dx \\
		&=&\frac{13}{54}.
	\end{eqnarray*}
	and
	\begin{eqnarray*}
		\int_{0}^{1}(-x^{3})_{-x^{2}}^{\alpha }(x)dx
		&=&\int_{0}^{1}(-f)_{-b}^{\alpha }(x)dx \\
		&=&\frac{1}{(1-\lambda )}\int_{0}^{1}(-f(x))dx-\frac{\lambda }{(1-\lambda )}
		\int_{0}^{1}(-b(x))dx \\
		&=&\frac{1}{(1-0.1)}\int_{0}^{1}(-x^{3})dx-\frac{0.1}{(1-0.1)}
		\int_{0}^{1}(-x^{2})dx \\
		&=&-\frac{13}{54}.
	\end{eqnarray*}
\end{ex}

\begin{lem}
	\label{lemma-fcomp}Let $f_{b}^{\alpha }$ be the $\alpha $-fractal function of $f\in
	\mathcal{C}(I)$ with the base function $b\in \mathcal{C}(I).$ If $g\in
	\mathcal{C}(I)$ is linear, then $g\circ f_{b}^{\alpha }$ is also $\alpha $
	-fractal function of $g\circ f$ with the base function $g\circ b$. In other
	words,
	\begin{equation*}
	(g\circ f)_{g\circ b}^{\alpha }=g\circ f_{b}^{\alpha }.
	\end{equation*}
\end{lem}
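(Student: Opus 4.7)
The plan is to invoke the uniqueness clause in the fixed-point characterization of $\alpha$-fractal functions. Since the Read-Bajraktarvic operator associated with any admissible triple (function, base function, scale vector) is a contraction on $\mathcal{C}^{\ast}(I)$ with factor $\lvert\alpha\rvert_{\infty}<1$, its fixed point is unique; hence, to identify $(g\circ f)_{g\circ b}^{\alpha}$ with $g\circ f_b^{\alpha}$, I only need to verify that $h:=g\circ f_b^{\alpha}$ satisfies the self-referential equation (\ref{dn4}) corresponding to the data $(g\circ f,\,g\circ b,\,\alpha)$.

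First I would check admissibility. Continuity of $g\circ f$ and $g\circ b$ on $I$ is immediate from the continuity of $g$, $f$, $b$ (extending the linear map $g$ to all of $\mathbb{R}$ if the ranges exceed $I$). The endpoint conditions required of a base function, namely $(g\circ b)(x_0)=(g\circ f)(x_0)$ and $(g\circ b)(x_N)=(g\circ f)(x_N)$, follow by applying $g$ to the analogous identities $b(x_0)=f(x_0)$ and $b(x_N)=f(x_N)$ already assumed in the construction of $f_b^{\alpha}$.

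The heart of the argument is a direct substitution. Writing $g(y)=cy+d$, I would apply $g$ to both sides of the self-referential equation
\begin{equation*}
f_b^{\alpha}(x)=f(x)+\alpha_i\bigl(f_b^{\alpha}-b\bigr)\bigl(L_i^{-1}(x)\bigr),\qquad x\in I_i,
\end{equation*}
and expand. By linearity, the scalar $\alpha_i$ passes through $g$, and the additive constant $d$ cancels inside the bracket because $g\circ f_b^{\alpha}-g\circ b=c(f_b^{\alpha}-b)$. After regrouping, the outcome is
\begin{equation*}
h(x)=(g\circ f)(x)+\alpha_i\bigl(h-g\circ b\bigr)\bigl(L_i^{-1}(x)\bigr),\qquad x\in I_i,
\end{equation*}
which is precisely equation (\ref{dn4}) for $(g\circ f)_{g\circ b}^{\alpha}$. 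Uniqueness of the fixed point of the RB operator then yields $h=(g\circ f)_{g\circ b}^{\alpha}$, completing the argument.

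The main obstacle is not depth but bookkeeping: one must correctly track the additive constant $d$ through the computation to see the cancellation within the bracket $f_b^{\alpha}-b$. That cancellation is the reason why linearity is the exact hypothesis needed --- for a nonlinear $g$, the quantity $g\bigl(f(x)+\alpha_i(\cdot)\bigr)$ cannot in general be decomposed into the template $(g\circ f)(x)+\alpha_i(\text{something}-\text{something})$, so the self-referential form breaks down and $g\circ f_b^{\alpha}$ fails to be an $\alpha$-fractal function of $g\circ f$.
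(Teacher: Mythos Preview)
Your proposal is correct and follows essentially the same route as the paper: apply $g$ to the self-referential equation for $f_b^{\alpha}$, observe that linearity yields the self-referential equation for the data $(g\circ f,\,g\circ b,\,\alpha)$, check the endpoint compatibility $(g\circ b)(x_0)=(g\circ f)(x_0)$ and $(g\circ b)(x_N)=(g\circ f)(x_N)$, and invoke uniqueness of the fixed point. If anything, your write-up is more explicit than the paper's, which simply asserts the transformed equation without displaying the $g(y)=cy+d$ bookkeeping you spell out.
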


\begin{proof}
	For $x\in I_{i},i=1,...,N,$ we have
	\begin{equation*}
	f_{b}^{\alpha }(x)=f(x)+\alpha _{i}(f_{b}^{\alpha }-b)(L_{i}^{-1}(x)).
	\end{equation*}
	Then, for $x\in I_{i},i=1,...,N,$
	\begin{eqnarray}
	(g\circ f_{b}^{\alpha })(x) &=&g(f_{b}^{\alpha }(x))  \notag \\
	&=&(g\circ f)(x)+\alpha _{i}(g\circ f_{b}^{\alpha }-g\circ b)(L_{i}^{-1}(x)).
	\label{lemma-fcomp1}
	\end{eqnarray}
	
	\noindent Since $(g\circ f)(x_{0})=(g\circ b)(x_{0})$ and $(g\circ f)(x_{N})=(g\circ
	b)(x_{N})$, the uniqueness of the self-referential equation ($\ref
	{lemma-fcomp1}$) gives the desired result.
\end{proof}

\begin{thm}
	\label{thrm-CwFintegral}Let $f_{b}^{\alpha }$ be the $\alpha $-fractal function of $
	f\in \mathcal{C}(I)$ with the base function $b\in \mathcal{C}(I).$ If $g\in
	\mathcal{C}(I)$ is linear, then
	\begin{equation*}
	\int_{x_{0}}^{x_{N}}(g\circ f_{b}^{\alpha })(x)dx=\frac{1}{(1-\lambda )}
	\int_{x_{0}}^{x_{N}}(g\circ f)(x)dx-\frac{\lambda }{(1-\lambda )}
	\int_{x_{0}}^{x_{N}}(g\circ b)(x)dx,
	\end{equation*}
	where $\lambda =\sum_{i=1}^{N}a_{i}\alpha _{i}.$
\end{thm}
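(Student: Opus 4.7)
The plan is to combine Lemma \ref{lemma-fcomp} with Theorem \ref{thrm-fintegral} in the most direct way possible. Lemma \ref{lemma-fcomp} tells us that when $g$ is linear, the composition $g\circ f_b^{\alpha}$ is itself an $\alpha$-fractal function, namely $(g\circ f)_{g\circ b}^{\alpha}$. So the left-hand side of the desired identity can immediately be reinterpreted as the integral of a fractal function whose underlying continuous function is $g\circ f$ and whose base is $g\circ b$.

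First I would verify the hypotheses that allow us to invoke Theorem \ref{thrm-fintegral} in this new setting: since $f,b\in\mathcal{C}(I)$ and $g\in\mathcal{C}(I)$ is linear, both $g\circ f$ and $g\circ b$ lie in $\mathcal{C}(I)$, and the join-up relations $(g\circ b)(x_0)=(g\circ f)(x_0)$ and $(g\circ b)(x_N)=(g\circ f)(x_N)$ follow at once from $b(x_0)=f(x_0)$ and $b(x_N)=f(x_N)$. Thus $(g\circ f)_{g\circ b}^{\alpha}$ is a legitimate $\alpha$-fractal function with exactly the same scale vector $\alpha$ and the same partition $\Delta$, so the parameter $\lambda=\sum_{i=1}^N a_i\alpha_i$ is unchanged.

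Next I would apply Lemma \ref{lemma-fcomp} to rewrite
\begin{equation*}
\int_{x_0}^{x_N}(g\circ f_b^{\alpha})(x)\,dx=\int_{x_0}^{x_N}(g\circ f)_{g\circ b}^{\alpha}(x)\,dx,
\end{equation*}
and then feed the right-hand side into Theorem \ref{thrm-fintegral} with $f$ replaced by $g\circ f$ and $b$ replaced by $g\circ b$. This produces the claimed formula directly, without any further computation.

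There is essentially no obstacle in this argument: the substantive work has already been done in Lemma \ref{lemma-fcomp} (identifying $g\circ f_b^{\alpha}$ as a fractal function) and Theorem \ref{thrm-fintegral} (the integral formula). The only point that deserves a brief remark is to confirm that $g$ being linear—rather than merely affine—matters only insofar as Lemma \ref{lemma-fcomp} was stated for linear $g$; the proof would therefore just cite the lemma and the theorem in sequence.
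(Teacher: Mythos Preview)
Your proposal is correct and follows exactly the paper's own argument: invoke Lemma~\ref{lemma-fcomp} to identify $g\circ f_b^{\alpha}$ with $(g\circ f)_{g\circ b}^{\alpha}$, then apply Theorem~\ref{thrm-fintegral} with $f$ and $b$ replaced by $g\circ f$ and $g\circ b$. The additional verification of the join-up conditions you include is a harmless elaboration of what the paper leaves implicit.
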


\begin{proof}
	Since $g\circ f_{b}^{\alpha }$ is the $\alpha $-fractal function of $g\circ
	f $ with the base function $g\circ b$, using Theorem $\ref{thrm-fintegral},$
	\begin{eqnarray*}
		\int_{x_{0}}^{x_{N}}(g\circ f_{b}^{\alpha })(x)dx
		&=&\int_{x_{0}}^{x_{N}}(g\circ f)_{g\circ b}^{\alpha }(x)dx \\
		&=&\frac{1}{(1-\lambda )}\int_{x_{0}}^{x_{N}}(g\circ f)(x)dx-\frac{\lambda }{%
			(1-\lambda )}\int_{x_{0}}^{x_{N}}(g\circ b)(x)dx.
	\end{eqnarray*}
\end{proof}

\section{Construction of Flipped $\alpha$-Fractal Function}
\label{sec-4}

\begin{defn}
	Let $I=[x_{0},x_{N}]$ and $I_{F}=[-x_{N},-x_{0}]$. We can define the flipped
	function of $f\in\mathcal{C}(I)$ about $y$-axis as
	\begin{equation*}
	f_{F}(-x)=f(x),\text{for all }x\in I.
	\end{equation*}
	or equivalently
	\begin{equation*}
	f_{F}(x)=f(-x),\text{for all }x\in I_{F}.
	\end{equation*}
\end{defn}
Let $f^{\alpha }(=f_{\Delta ,b}^{\alpha })$ be the $\alpha $-fractal
function associated to $f$ with the base function $b$ and the partition $\Delta $. Recall the fixed point equation for $f^{\alpha }$:
\begin{equation*}
f^{\alpha }(x)=f(x)+\alpha _{i}(f^{\alpha }-b)(L_{i}^{-1}(x)),
\end{equation*}
for all $x\in I_{i},$ $i=1,2,...,N$.

\noindent Let $\Delta _{F}:-x_{N}<-x_{N-1}<...<-x_{0}$ \ is a partition of the closed
interval $I_{F}=[-x_{N},-x_{0}]$. Set $I_{Fi}=[-x_{N+1-i},-x_{N-i}]$ \ for $
i=1,2,...,N$. Let $f_{F}\in \mathcal{C}(I_{F})$ such that $f_{F}(-x)=f(x),$
for all $x\in I$ and let $\alpha _{F}=(\alpha _{N},\alpha _{N-1},...,\alpha
_{1}).$ Define the RB operator $T_{\Delta _{F},b_{F}}:\mathcal{C}^{\ast
}(I_{F})\rightarrow \mathcal{C}^{\ast \ast }(I_{F})$ by
\begin{equation}
(T_{\Delta _{F},b_{F}}g)(x)=f_{F}(x)+\alpha
_{N+1-i}(g-b_{F})((L_{F})_{i}^{-1}(x)),x\in I_{Fi},  \label{rboperator-}
\end{equation}
where $L_{Fi}(x)=a_{Fi}x+e_{Fi},$ $a_{Fi}=a_{N+1-i}$, $
e_{Fi}=-e_{N+1-i},i=1,2,...,N$ and $b_{F}\in \mathcal{C}(I_{F})$ satisfying $
b_{F}(-x)=b(x),$ for all $x\in I$. We can easily show that $T_{\Delta
	_{F},b_{F}}$ is a contraction map.

\noindent Let $f_{F}^{\alpha _{F}}(=f_{F\Delta _{F},b_{F}}^{\alpha _{F}})$ be the $
\alpha _{F}$-fractal function of $f_{F}$ with respect to ($\ref{rboperator-}$
). From ($\ref{rboperator-}$), $f_{F}^{\alpha _{F}}$ satisfies the fixed
point equation
\begin{equation}
f_{F}^{\alpha _{F}}(x)=f_{F}(x)+\alpha _{N+1-i}(f_{F}^{\alpha
	_{F}}-b_{F})((L_{F})_{i}^{-1}(x)),x\in I_{Fi},i=1,2,...,N.  \label{fffe1}
\end{equation}
From ($\ref{fffe1}$), for $i=1,2,...,N$
\begin{equation}
f_{F}^{\alpha _{F}}(-x)=f_{F}(-x)+\alpha _{N+1-i}(f_{F}^{\alpha
	_{F}}-b_{F})((L_{F})_{i}^{-1}(-x)),x\in I_{N+1-i}.  \label{fffe2}
\end{equation}
Since $(L_{F})_{i}^{-1}(-x)=-L_{N+1-i}^{-1}(x),i=1,2,...,N,$ then ($\ref
{fffe2}$) becomes, for $i=1,2,...,N$
\begin{equation}
f_{F}^{\alpha _{F}}(-x)=f_{F}(-x)+\alpha _{N+1-i}(f_{F}^{\alpha
	_{F}}-b_{F})(-L_{N+1-i}^{-1}(x)),x\in I_{N+1-i}.  \notag
\end{equation}
This above equation is the same as
\begin{equation*}
f_{F}^{\alpha _{F}}(-x)=f_{F}(-x)+\alpha _{i}(f_{F}^{\alpha
	_{F}}-b_{F})(-L_{i}^{-1}(x)),x\in I_{i},i=1,2,..,N.
\end{equation*}
Therefore, for $i=1,2,...,N$
\begin{eqnarray*}
	f_{F}^{\alpha _{F}}(-x) &=&f_{F}(-x)+\alpha _{i}f_{F}^{\alpha
		_{F}}(-L_{i}^{-1}(x))-\alpha _{i}b_{F}(-L_{i}^{-1}(x)),x\in I_{i} \\
	&=&f(x)+\alpha _{i}f_{F}^{\alpha _{F}}(-L_{i}^{-1}(x))-\alpha
	_{i}b(L_{i}^{-1}(x)),x\in I_{i}.
\end{eqnarray*}
Let $g$ be the flipped function of $f_{F}^{\alpha _{F}}$, then $f_{F}^{\alpha_{F}}(-x)=g(x),x\in I.$
Now, for all $x\in I_{i},i=1,2,...,N$
\begin{eqnarray*}
	g(x) &=&f_{F}^{\alpha _{F}}(-x) \\
	&=&f(x)+\alpha _{i}f_{F}^{\alpha _{F}}(-L_{i}^{-1}(x))-\alpha
	_{i}b(L_{i}^{-1}(x)) \\
	&=&f(x)+\alpha _{i}g(L_{i}^{-1}(x))-\alpha _{i}b(L_{i}^{-1}(x)) \\
	&=&f(x)+\alpha _{i}(g-b)(L_{i}^{-1}(x)).
\end{eqnarray*}
The uniqueness of the fixed point equation\ of $f^{\alpha }$ implies that
\begin{equation*}
g(x)=f^{\alpha }(x)=f_{F}^{\alpha _{F}}(-x),x\in I_{i},i=1,2,..,N.
\end{equation*}
Thus we see that $f_{F}^{\alpha _{F}}$ is the flipped function of $f^{\alpha }$ about $y$-axis. We say that $f_{F}^{\alpha _{F}}$ is the flipped $\alpha $-fractal
function of $f$.

\begin{ex}
	Figure~\ref{fig2} corresponds to the function $f(x)=\sqrt{x}$ on the interval $
	[0,1] $ and its flipped function $(\sqrt{x})_{F}$ on $[-1,0]$. The fractal function $
	f^{\alpha }$ associated with $f$ corresponding to the partition $\Delta
	:0<0.2<0.4<0.6<0.8<1$, the scaling vector $\alpha =(0.3,0.5,0.2,0.15,0.02)$, and
	the base function $b(x)=x$ and the flipped $\alpha $-fractal function of $f$
	are shown in Figure~\ref{fig3}.
\end{ex}

\begin{figure}[h]
	\centering
	\resizebox{0.74\columnwidth}{!}{
		\includegraphics{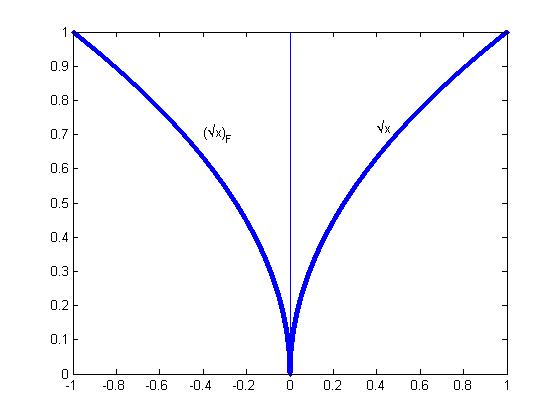} }
	\caption{The function $\protect\sqrt{x}$ and its flipped function $(\protect\sqrt{x})_{F}$ }
	\label{fig2}
\end{figure}
\begin{figure}[h]
	\centering
	\resizebox{1.71\columnwidth}{!}{
		\includegraphics{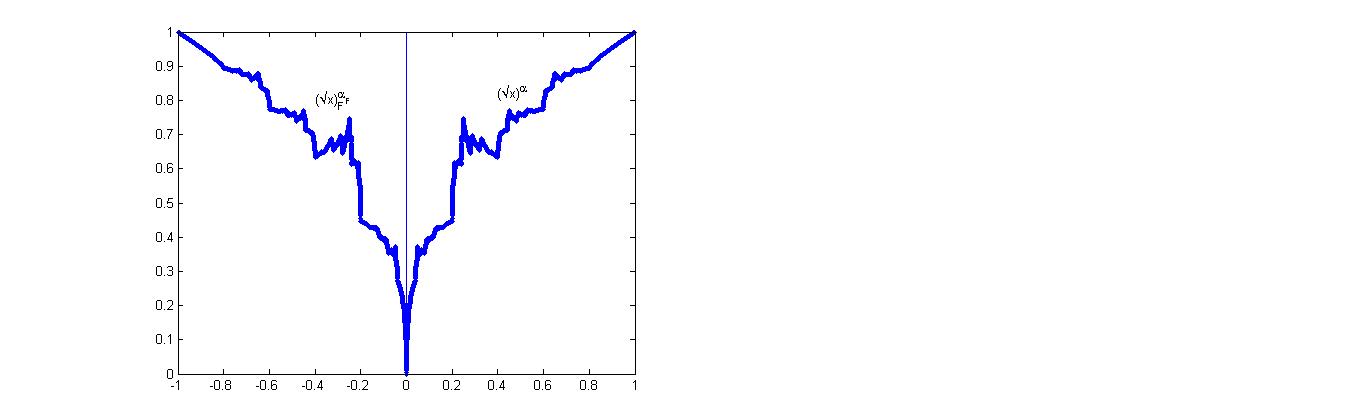} }
	\caption{Fractal function $(\protect\sqrt{x})^{\protect\alpha }$ and its flipped function $(\protect\sqrt{x})_{F}^{{\protect\alpha }_{F}}$}
	\label{fig3}
\end{figure}

\begin{thm}
	\label{thrm-ffintegral}Suppose that $I=[x_{0},x_{N}]$ be any compact
	interval. Let $f^{\alpha }$ be the fractal function of $f\in \mathcal{C}(I)$
	with base $b\in \mathcal{C}(I)$, then
	\begin{equation*}
	\int_{-x_{N}}^{-x_{0}}f_{F}^{\alpha _{F}}(x)dx=\int_{x_{0}}^{x_{N}}f^{\alpha
	}(x)dx.
	\end{equation*}
\end{thm}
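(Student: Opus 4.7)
The plan rests entirely on the identity $f_F^{\alpha_F}(-x) = f^\alpha(x)$ for all $x \in I$, which was established immediately before the statement of the theorem in the construction of the flipped $\alpha$-fractal function; this identity asserts that $f_F^{\alpha_F}$ is precisely the reflection of $f^\alpha$ about the $y$-axis. Given this relation, the theorem reduces to a routine change of variable, so no further fixed-point or contraction argument is needed at this stage.

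Concretely, I would perform the substitution $u = -x$ (so $du = -dx$) in the integral on the left-hand side. The limits transform as $x = -x_N \mapsto u = x_N$ and $x = -x_0 \mapsto u = x_0$; absorbing the minus sign from $du$ by swapping the limits yields
\begin{equation*}
\int_{-x_N}^{-x_0} f_F^{\alpha_F}(x)\,dx = \int_{x_0}^{x_N} f_F^{\alpha_F}(-u)\,du.
\end{equation*}
Substituting $f_F^{\alpha_F}(-u) = f^\alpha(u)$ in the right-hand side immediately gives $\int_{x_0}^{x_N} f^\alpha(u)\,du$, which is the claimed equality.

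There is essentially no genuine obstacle in the proof of this theorem itself: all the substantive work has already been carried out in the construction of $f_F^{\alpha_F}$, where the uniqueness of the fixed point of the Read–Bajraktarevi\'c operator was exploited to identify the function $g(x) := f_F^{\alpha_F}(-x)$ with $f^\alpha(x)$ on each subinterval $I_i$. Continuity of $f^\alpha$ on $I$ and of $f_F^{\alpha_F}$ on $I_F$ ensures that both Riemann integrals are well-defined and that the linear change of variable $u = -x$ is justified, so the argument reduces to a one-line calculation after invoking the construction.
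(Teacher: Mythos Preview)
Your argument is correct. The identity $f_{F}^{\alpha_{F}}(-x)=f^{\alpha}(x)$ for $x\in I$ is indeed established in the construction immediately preceding the theorem, and once it is available a single substitution $u=-x$ gives the result exactly as you wrote.

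This is, however, a genuinely different route from the paper's own proof. The paper does \emph{not} invoke the reflection identity directly. Instead it starts afresh from the self-referential equation for $f_{F}^{\alpha_{F}}$, integrates it over $I_{F}$, performs the substitution $(L_{F})_{i}^{-1}(x)=z$, and thereby rederives the formula of Theorem~\ref{thrm-fintegral} in the flipped setting:
\[
\int_{-x_{N}}^{-x_{0}} f_{F}^{\alpha_{F}}(x)\,dx
=\frac{1}{1-\lambda}\int_{-x_{N}}^{-x_{0}} f_{F}(x)\,dx
-\frac{\lambda}{1-\lambda}\int_{-x_{N}}^{-x_{0}} b_{F}(x)\,dx,
\]
using $\lambda_{F}=\sum a_{N+1-i}\alpha_{N+1-i}=\lambda$. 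Only then does it substitute $f_{F}(x)=f(-x)$, $b_{F}(x)=b(-x)$ and change variables in the integrals of $f_{F}$ and $b_{F}$ to match the right-hand side of Theorem~\ref{thrm-fintegral} for $f^{\alpha}$. Your approach is shorter and more conceptual, since it exploits the fixed-point uniqueness argument that has already done the hard work; the paper's approach is more computational but has the minor virtue of exhibiting the explicit integral formula for $f_{F}^{\alpha_{F}}$ along the way.
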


\begin{proof}
	For $x\in I_{Fi},i=1,...,N,$ the self-referential equation for $
	f_{F}^{\alpha _{F}}$ is
	\begin{equation*}
	f_{F}^{\alpha _{F}}(x)=f_{F}(x)+\alpha _{N+1-i}(f_{F}^{\alpha
		_{F}}-b_{F})((L_{F})_{i}^{-1}(x)).
	\end{equation*}
	Then%
	\begin{eqnarray*}
		\int_{-x_{N}}^{-x_{0}}f_{F}^{\alpha _{F}}(x)dx
		&=&\int_{-x_{N}}^{-x_{0}}f_{F}(x)dx \\
		&&+\sum_{i=1}^{N}\alpha _{N+1-i}\int_{-x_{N+1-i}}^{-x_{N-i}}(f_{F}^{\alpha
			_{F}}-b_{F})((L_{F})_{i}^{-1}(x))dx.
	\end{eqnarray*}
	Letting $(L_{F})_{i}^{-1}(x)=z,$
	\begin{eqnarray*}
		\int_{-x_{N}}^{-x_{0}}f_{F}^{\alpha _{F}}(x)dx
		&=&\int_{-x_{N}}^{-x_{0}}f_{F}(x)dx \\
		&&+\left[ \sum_{i=1}^{N}a_{N+1-i}\alpha _{N+1-i}\right]
		\int_{-x_{N}}^{-x_{0}}(f_{F}^{\alpha _{F}}-b_{F})(z)dz \\
		&=&\int_{-x_{N}}^{-x_{0}}f_{F}(x)dx+\lambda
		_{F}\int_{-x_{N}}^{-x_{0}}(f_{F}^{\alpha _{F}}-b_{F})(x)dx,
	\end{eqnarray*}
	where $\lambda _{F}=\sum_{i=1}^{N}a_{N+1-i}\alpha
	_{N+1-i}=\sum_{i=1}^{N}a_{i}\alpha _{i}=\lambda .$
	
	\noindent Therefore
	\begin{eqnarray*}
		\int_{-x_{N}}^{-x_{0}}f_{F}^{\alpha _{F}}(x)dx &=&\frac{1}{(1-\lambda )}
		\int_{-x_{N}}^{-x_{0}}f_{F}(x)dx-\frac{\lambda }{(1-\lambda )}
		\int_{-x_{N}}^{-x_{0}}b_{F}(x)dx \\
		&=&\frac{1}{(1-\lambda )}\int_{-x_{N}}^{-x_{0}}f(-x)dx-\frac{\lambda }{
			(1-\lambda )}\int_{-x_{N}}^{-x_{0}}b(-x)dx \\
		&=&\frac{1}{(1-\lambda )}\int_{x_{0}}^{x_{N}}f(x)dx-\frac{\lambda }{
			(1-\lambda )}\int_{x_{0}}^{x_{N}}b(x)dx \\
		&=&\int_{x_{0}}^{x_{N}}f^{\alpha}(x)dx.
	\end{eqnarray*}
\end{proof}

The following example is an application of the Theorem $\ref{thrm-ffintegral}
$.

\begin{ex}
	Consider the integral $\int_{-1}^{0}(x^{2})_{-x}^{\alpha }(x)dx,$ where $\alpha =(0.2,-0.1,0,0.3,0.4).$
	To apply the Theorem $\ref{thrm-ffintegral}$, first we\ need to find out $
	\int_{0}^{1}(x^{2})_{x}^{\alpha }(x)dx.$ For this, let $\Delta
	:0<0.2<0.4<0.6<0.8<1$ be a partition of $I=[0,1]$, then $
	a_{i}=0.2,i=1,2,...,5,$ and $\lambda =\sum_{i=1}^{5}a_{i}\alpha _{i}=0.16.$
	Using Theorem $\ref{thrm-fintegral}$ and applying the Theorem $\ref
	{thrm-ffintegral}$,
	\begin{eqnarray*}
		\int_{-1}^{0}(x^{2})_{-x}^{\alpha }(x)dx &=&\int_{0}^{1}(x^{2})_{x}^{\alpha
		}(x)dx \\
		&=&\frac{1}{(1-\lambda )}\int_{0}^{1}x^{2}dx-\frac{\lambda }{(1-\lambda )}
		\int_{0}^{1}xdx \\
		&=&\frac{1}{3(1-0.16)}\left[ x^{3}\right] _{0}^{1}-\frac{0.16}{2(1-0.16)}\left[
		x^{2}\right] _{0}^{1} \\
		&=&\frac{19}{63}.
	\end{eqnarray*}
\end{ex}


\begin{thebibliography}{}
	\bibitem{BarnsFE}
	Barnsley, M.F. \textit{Fractal Everywhere}, 2nd edn., Academic Press, USA, (1993).
	\bibitem{BarnsFI}
	Barnsley, M.F. Fractal functions and interpolation. {\em Constr. Approx.} \textbf{2} (1986), 303--329.
	\bibitem{BarnsHarri}
	Barnsley, M.F.; Harrington A.N. The Calculus of Fractal Interpolation Functions. {\em J. Approx. Theory} \textbf{57} (1989), 14--34.
	\bibitem{Hutch}
	Hutchinson, J.E. Fractals and self-similarity. {\em Indiana Univ. Math. J.} \textbf{30} (1981), 713--747.
	\bibitem{Mandel}
	Mandelbrot, B.B. \textit{The Fractal Geometry of Nature}, W.H. Freeman and Company, New York, (1983).
	\bibitem{MassoFSW}
	Massopust, P.R. \textit{Fractal Functions, Fractal Surfaces, and Wavelets}, Academic Press, New York, (1995).
	\bibitem{NavaFP}
	Navascu\'{e}s, M.A. Fractal Polynomial Interpolation. {\em Z. Analysis Anwend.} \textbf{25(2)} (2005), 401--418.
	\bibitem{NavaFT}
	Navascu\'{e}s, M.A. Fractal trigonometric approximation. {\em Electron Trans. Numer. Anal.} \textbf{20} (2005), 64--74.
	\bibitem{NavaNP}
	Navascu\'{e}s, M.A. Non-smooth polynomials. {\em Int. J. Math. Anal.} \textbf{1} (2007), 159--174.
	\bibitem{NavaChand}
	Navascu\'{e}s, M.A.; Chand,  A.K.B. Fundamental sets of fractal functions. {\em Acta Appl. Math.} \textbf{100(3)} (2008), 247--261.
	\bibitem{NavaSS}
	Navascu\'{e}s, M.A. Reconstruction of sampled signals with fractal functions. {\em Acta Appl. Math.} \textbf{110(3)} (2010), 1199--1210.
	\bibitem{NavaFH}
	Navascu\'{e}s, M.A. Fractal Haar system. {\em Nonlinear Anal.} \textbf{74(12)} (2011), 4152--4165.
	\bibitem{NavaSebNI}
	Navascu\'{e}s, M.A.; Sebasti\'{a}n, M.V. Numerical integration of affine fractal functions. {\em J. Comput. Appl. Math.} \textbf{252} (2013), 169--176.
	\bibitem{Pan}
	Pan, X.Z. Fractional Calculus of Fractal Interpolation Function on $[0,b] (b>0)$. {\em Abstr. Appl. Anal.} (2014), Article ID 640628, http://dx.doi.org/10.1155/2014/640628.
\end{thebibliography}
\end{document}